\documentclass[14pt]{article}
\usepackage{amsmath}
\usepackage{amssymb}
\usepackage{amsthm}
\usepackage{csquotes}
\usepackage{epsfig}
\usepackage{graphicx}

\newtheorem{theorem}{Theorem}[section]
\newtheorem{lemma}[theorem]{Lemma}
\newtheorem{corollary}[theorem]{Corollary}
\newtheorem{proposition}[theorem]{Proposition}

\newtheorem{remark}[theorem]{Remark}

\newtheorem{definition}[theorem]{Definition}

\numberwithin{equation}{section}

\numberwithin{equation}{section}

\renewcommand{\phi}{\varphi}

\usepackage{xcolor}

 \newcommand{\mc}[1]{{\mathcal #1}}

 \newcommand{\bb}[1]{{\mathbb #1}}

\newcommand{\p}{\partial}

\newcommand{\pfrac}[2]{\genfrac{}{}{}{1}{#1}{#2}}
\newcommand{\ppfrac}[2]{\genfrac{}{}{}{2}{#1}{#2}}
\newcommand{\at}[2]{\genfrac{}{}{0pt}{}{#1}{#2}}

\hyphenation{Define}

\begin{document}

\title{Diffusion Processes: entropy, Gibbs states,  and  the continuous time Ruelle  operator}
\author{A. O.  Lopes,  G. Muller, and A. Neumann \\  Instituto de Matem\'atica e Estat\'istica,\\ UFRGS, Porto Alegre, Brasil}
\smallskip

\date{\today}

\maketitle

\begin{abstract}

We consider a   Riemannian compact manifold $M$, the associated Laplacian $\Delta$ and the corresponding Brownian motion $X_t$, $t\geq 0.$
Given a Lipschitz function $V:M\to\bb R$ we consider the operator $\frac{1}{2}\Delta+V$,
which acts on differentiable functions $f: M\to\bb R$ via the expression
$$\frac{1}{2} \Delta f(x)+\,V(x)f(x) \,,$$
for all $x\in M$.

Denote by $P_t^V$, $t \geq 0,$ the semigroup acting on functions $f: M\to\bb R$ given by
$$P_{t}^V (f)(x)\,:=\, \bb E_{x} \big[e^{\int_0^{t} V(X_r)\,dr} f(X_t)\big].\,$$

We will derive results that show that this semigroup is a continuous-time version of the discrete-time Ruelle operator.

Consider the positive differentiable eigenfunction $F: M \to \mathbb{R}$ associated with the main eigenvalue $\lambda$,
for the semigroup $P_t^V$, $t \geq 0$. From the function $F$, in a procedure similar to the one used in discrete-time Thermodynamic Formalism, we can associate by way of a coboundary procedure, a certain stationary Markov semigroup.
We show that the probability on the Skorohod space obtained from this new stationary Markov semigroup meets the requirements to be called stationary Gibbs state associated with the potential $V$. We define entropy, pressure, and the continuous-time Ruelle operator. Also, we present a variational principle of pressure for such a setting.
\vspace{0.2cm}\newline
\noindent {\small{{\textbf{Keywords:}} Geometric Laplacian, Diffusions, Entropy, Gibbs states, Pressure, continuous-time Ruelle operator, eigenfunction, eigenvalue, Feynman-Kac formula, Thermodynamic Formalism.}}
\vspace{0.2cm}\newline
\noindent {\small{{\textbf{2020 Mathematics Subject Classification:}} 60J25; 60J60; 60J65; 58J65; 37D35.}}

\medskip

Emails: $arturoscar.lopes@gmail.com$

$gustavo.muller\_ nh@hotmail.com$ 

$neumann.adri@gmail.com$

\end{abstract}

In the present text, we will work
with $\{X_t;\,t\geq 0\}$ the Brownian Motion with state-space on a Riemannian compact manifold $M$. One particular example could be $\bb S^1$, where $\bb S^1$ is the interval $[0,1]$ with $0\equiv 1$, or a $n$ dimensional torus. We call $M$ the state space. In order to simplify the notation we will assume that $M=\mathbb{S}^1$,  equipped with the Euclidean metric (for the general case similar results can be obtained, but then we would get more cumbersome expressions).

We will denote by $\frac{1}{2}\pfrac{\partial^2}{\partial x^2} $ the Laplacian on the Riemannian manifold $\mathbb{S}^1$. The Laplacian operator is selfadjoint (see \cite{Stri}). As mentioned in \cite{Bov} the associated stochastic process is reversible and this will play an important role here.



The Brownian Motion has as its infinitesimal generator, the Laplacian, which is the operator $L=\frac{1}{2}\Delta=\frac{1}{2}\pfrac{\partial^2}{\partial x^2}$ acting on functions $f\in C^2( M)$.
The trajectories of this process are in $\mc C$, the space of all continuous functions defined in $[0,T]$ taking values in $M$.

{\bf{Denote $\bb P_\mu$ the probability in $\mc C$ induced by $\{X_t;\,t\geq 0\}$ and the initial probability $\mu$. If the initial measure is $\delta_x$ (Dirac's measure), for some $x\in M$, we will denote $\bb P_{\delta_x}$ only by $\bb P_{x}$.
Note that we can related $\bb P_\mu$, for any initial probability $\mu$ on $M$, as $\bb P_{x}$ through the next integral over $M$:
\begin{equation}\label{explain_measure}
    \bb P_\mu[A]=\int_{M} \bb P_{x}[A]\, d\mu(x),
\end{equation}
for all measurable set A in the Skorohod space.
Moreover, the expectation (integral) with respect to $\bb P_{\mu}$ or $\bb P_{x}$ will be denoted by
$\bb E_{\mu}$ or $\bb E_{x}$, respectively.}}

 {\bf  The Skorohod space $\mathcal{C}$ is the set of paths $w:[0,\infty) \to M $ which  are c\`adl\`ag (see  \cite{Bov} or \cite{EK}).  We are interested in probabilities on this set.
For a fixed $T>0$,
sometimes it  is natural to consider the restriction of $\mathcal{C}$ to the subset  of paths of the form $\{\,w:[0,T) \to M \,\}$. As we are considering diffusions, with probability $1$ the paths in $\mathcal{C}$ are continuous (see \cite{AGM}), and then,   from now on we will denote the set of paths by $\mathcal{C}$,  in accord with the above notation.}

We define for each fixed $s\in \mathbb{R}^{+}\cup \{0\}$ the
$\mathcal{ B}$-measurable transformation $\Theta_s: \mathcal{C} \to \mathcal{C}$
given by $\Theta_s (w_t) = w_{t+s}$. $\Theta_s$, $s \geq 0,$ is called the continuous time shift. A stationary diffusion process defines a probability on $\mathcal{C}$, which is invariant for the continuous time flow
$\Theta_s: \mathcal{C} \to \mathcal{C}$, $s \geq 0$ (see \cite{Loche} for instance)

\begin{figure}[!htb]
	\centering
	{\includegraphics[scale=0.77]{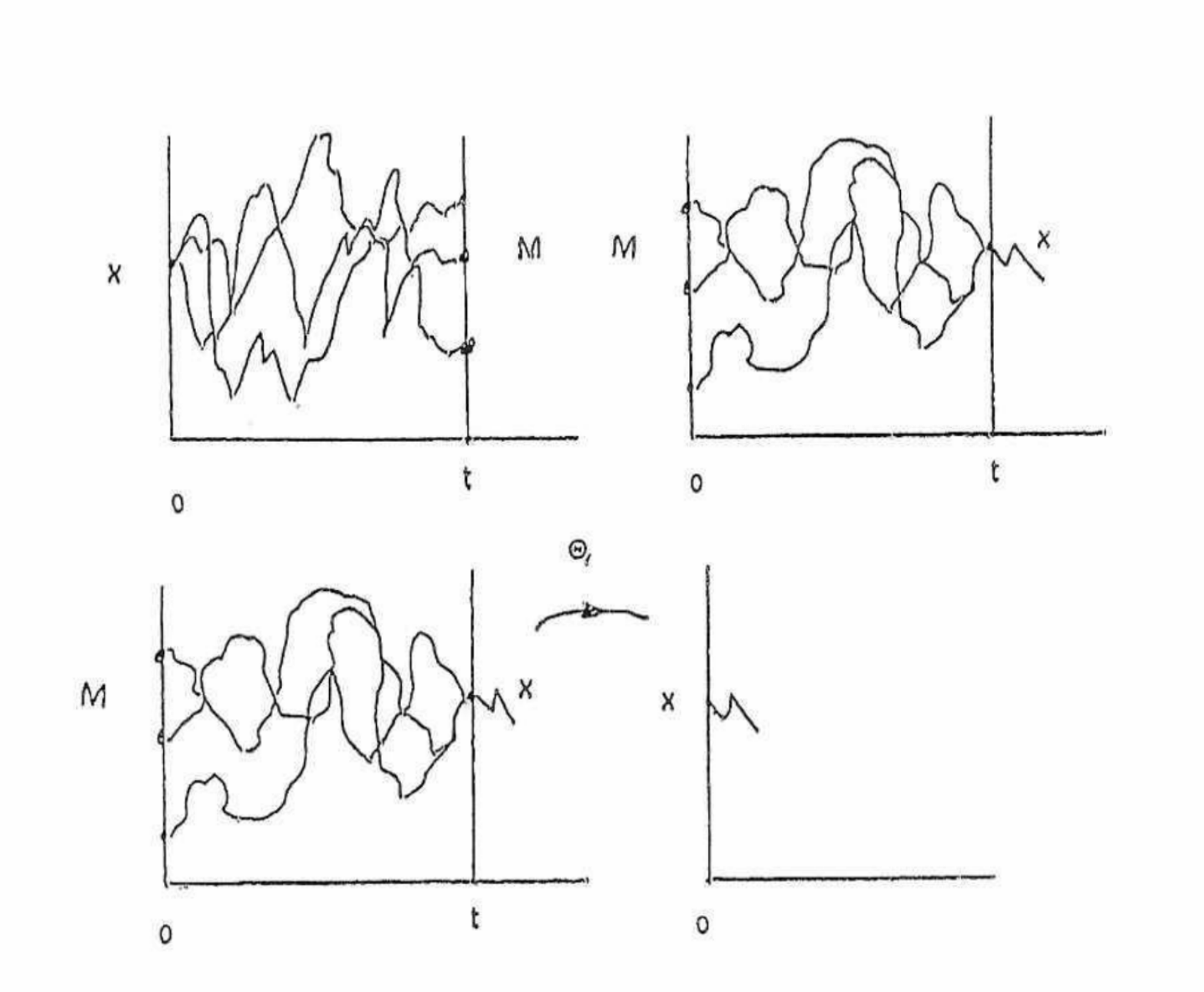}}
\caption{In the left top figure, the point $x\in M$ is the value at time $t=0$ of the path obtained as the image - by the continuous time shift $\Theta_t$ - of the set of paths described above on the left figure. In the top right figure the point $x\in M$ is the value at time $t$, and we exhibit a set of paths that are preimages of a path (which takes the value $x$ at time $t$) by the shift $\Theta_t$. The bottom figure describes our claim in a more precise form.}
\label{finalFK}
\end{figure}

{\bf{
When $\mu$ is the volume form on $M$ the associated Markov Process $\{X_t;\,t\geq 0\}$ is invariant for the flow $\Theta_s$, $s \geq 0.$ The probability $\bb P_\mu$, defined in the sense of \eqref{explain_measure}, on the Skorohod space obtained from the process will play the role of the {\it a priori} probability   (in a similar way as in \cite{KLMN} and \cite{BEL}). 
}}

General results for continuous-time Markov chains that were specially designed to be used in our setting appear on {\bf Section 1 in \cite{AG}}.

 Note that when considering continuous-time Markov chains the paths are not continuous (in the case of the Brownian motion and diffusions the paths are continuous). The work \cite{KLMN} presents results somewhat similar to those we will describe here, but there the authors consider the continuous-time Markov chains

Let $V:M\to\bb R$ a Lipschitz function and consider the operator $L+V$,
which acts on functions $f\in C^2(M)$ by the expression
\begin{equation*}
(L+V)(f) (x)\,= \frac{1}{2}\pfrac{\partial^2}{\partial x^2}f(x)+\,V(x)f(x) \,,
\end{equation*}
for all $x\in M$.

It is a classical result that there exists a positive differentiable eigenfunction $F: M \to \mathbb{R}$ associated with an eigenvalue $\lambda_V$ (the smallest) for the above operator $L+V$ (see Proposition 2.9 Chapter 8 in \cite{Tay2}, \cite{DV} or \cite{LeSa}).

For $t\geq 0$,  consider
\begin{equation}\label{0}
P_{t}^V (f)(x)\,:=\, \bb E_{x} \big[e^{\int_0^{t} V(X_r)\,dr} f(X_t)\big]\,,
\end{equation}
for all continuous function $f:M\to \bb{R}$ and $x\in M$. By Feynman-Kac, $\{P_{t}^V,\,t\geq 0\}$
defines a semigroup associated with the infinitesimal operator $L+V$ (see Chapter 11 in \cite{Tay2}) which is not Markovian (stochastic). {\bf{The  Feynman-Kac formula is the main inspiration for our reasoning here.}}

Given such $V$ we can normalize this semigroup (associated with the infinitesimal operator $L+V$) to get a new Markov semigroup. This is done via a kind of {\it coboundary procedure} using the positive eigenfunction $F$ and the eigenvalue (for the analogous discrete-time procedure see \cite{PP}). We call the new associated stationary Markov Process - get in this way - the Gibbs Process associated with the perturbation $V$. The shift-invariant probability on the Skorohod space $\mathcal{C}$ obtained from the Gibbs process for $V$ will be called the Gibbs state probability associated with the potential $V$ (see also \cite{BEL} and \cite{KLMN}).

{\bf By \cite{Stri} there is a volume form $\mu$ on $M$ such that the infinitesimal generator $L$  is selfadjoint in $L^2(\mu)$, then we obtain the Brownian motion stochastic process $X_t$, $t\geq 0$, on $M$, with stationary initial distribution $\mu$, which induces a invariant probability $\bb P_\mu$, defined in the sense of \eqref{explain_measure}, on the Skorohod space.
In Appendix \ref{a1} we show that for any continuous functions $f:M \to \mathbb{R}$,  $g:M \to \mathbb{R}$ and $t>0$}

\begin{equation}\label{sim}
\begin{split}
    \int_{\mc C}e^{\int_0^{t} V(w(r))\,dr} &f(w(t))\,\textbf{1}_{w(0)=x}\,d\bb P_{\mu}(w) \\&=\int_{\mc C} e^{\int_0^{t} V(w(r))\,dr} f(w(0))\,\textbf{1}_{w(t)=x}\,d\bb P_{\mu}(w) \,.
\end{split}
\end{equation}

In Figure 
\ref{finalFK}, we show schematically the difference between
looking at the integration of paths by $\bb P_\mu$ via the left-hand side or the right-hand side of
\eqref{sim}; {\bf note that on the right side of \eqref{sim} we take $w(t)=x$, and not $w(0)=x$}. It is important to notice that the Laplacian operator is selfadjoint (see \cite{Stri}). It means that the continuous-time system is reversible. For an explanation of the interest in reversibility in Statistical Mechanics (for continuous Markov chains taking values on a finite set) see Section 4.4 in \cite{Vila}.

The right-hand side of \eqref{sim} is the natural generalization to a continuous-time setting of the classical discrete-time Ruelle operator {\bf (as presented in \cite{ACR}, \cite{LMMS}, and \cite{PP})}. We elaborate on this claim.

Points in the symbolic space $\{1,2,..,d\}^\mathbb{N}$ are denoted by $$x=(x_0,x_1,x_2,..,x_n,...),$$ $x_j \in \{1,2,...,d\}.$

Given a Holder continuous potential $A:\{1,2,...,d\}^\mathbb{N}\to \mathbb{R}$, the discrete time Ruelle operator $\mathcal{L}_A$ acts on continuous functions $\psi: \{1,2,..,d\}^\mathbb{N}\to \mathbb{R}$ via
$$\varphi(x_0,x_1,x_2,x_3,...)\,=\,\mathcal{L}_A (\psi)(x)= $$
$$ \sum_{a=1}^d e^{A(a,x_0, x_1,x_2,x_3,...)}\psi(a,x_0,x_1,x_2,x_3,...)=\sum_{\sigma(y)=x} e^{A(y)} \psi(y),$$
where $\sigma$ is the discrete time shift acting on $\{1,2,..,d\}^\mathbb{N}.$ In this case the {\it a priori} measure is the counting measure on $\{1,2,...,d\}$ (see \cite{LMMS}). Given $n \in \mathbb{N}$
\begin{equation}\label{simn}\mathcal{L}_A^n (\psi)(x)= \sum_{\sigma^n(y)=x} e^{A(y)+ A(\sigma(y) + ...+ A(\sigma^{n-1} (y))} \psi(y).
\end{equation}

Comparing the two expressions, it is fair to say that the right-hand side of  \eqref{sim} is a continuous time version of \eqref{simn} in the case the function $A(x)=A(x_0,x_1,x_2,..,x_n,...)$ depends just on the first coordinate $x_0.$

In Section \ref{um} we present a careful analysis of the main properties  of the continuous-time Ruelle operator, and in Section \ref{dois} we consider relative entropy and a variational principle of pressure. In this way the key elements of the discrete time Thermodynamic Formalism are also present in our continuous time setting.

For related results and applications to Physics see \cite{Kif}, \cite{Kif1}, \cite{DV}, \cite{LMST}, \cite{Gomes} and  \cite{LT}. {\bf  The notion of relative entropy presented here is very close to the respective
notion in statistical mechanics as described in \cite{Georgii}. A definition of entropy for continuous time Markov chains taking values on a finite set is presented in \cite{Dumi} and \cite{Gira}.}

\section{On the continuous time Gibbs state for the potential $V$} \label{um}

General references for basic results on diffusions and semigroups that we use here appear, for example, in \cite{Tay2}, \cite{AG}, \cite{KT2}, \cite{AGM}, \cite{Bo} and \cite{KS}.

Let $\lambda_V$ be the biggest eigenvalue of $L+V$ and $F_V$ the differentiable eigenfunction associated with $\lambda_V$, then $F_V>0$ (for the existence theorems see \cite{AGM}, \cite{EgoS} or \cite{Tay2}). To simplify the notation we will denote $F_V$ by $F$.
For $t\geq 0$,  one defines
\begin{equation}\label{qv}
\mc P^{V}_t (f) (x)\,=\,\bb E_x\Big[ e^{\int_0^{t}\, V(X_r) dr}\, \pfrac{F(X_t)}{e^{{\lambda_V} t}\, F(x)}\, f( X_t)\Big]=\frac{P^{V}_{t} (Ff) (x)}{e^{{\lambda_V}\, t}\, F(x)}\,,
\end{equation}
where $F$ and $\lambda_V$ are  the eigenfunction and the eigenvalue, respectively; {\bf that is $\mc P^{V}_t (F)= e^{\lambda_V\, t} F$}.
Then $\mc P^{V}_t( 1)(x) =1$, $\forall x \in M$.
This defines a stochastic semigroup, which is what we were looking for. From this, we will get a new continuous-time Markov process, which will help
to define the Gibbs state for $V$.

\begin{proposition} \label{LV} Assume  $V: M\to\bb R$ is a Lipschitz function and we define the operator $\mc L_{V}$ acting on $f\in C^2( M)$
as
$$ \mc L_{V}(f)(x)=$$
\begin{equation}\label{gerVF}
\frac{1}{F(x)}(L+V)(Ff)(x)-f(x)\lambda_V=\frac{1}{2}\pfrac{\partial^2}{\partial x^2}f(x)+\pfrac{\partial}{\partial x}\log(F(x))\pfrac{\partial}{\partial x}f(x)\,.
\end{equation}
Then, this operator, $\mc L_V$,
is the infinitesimal generator associated with a semigroup $\{\mc P^{V}_t,\,t\geq 0\}$ defined in
\eqref{qv}.
\end{proposition}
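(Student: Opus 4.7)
The plan is to proceed in two main steps: first establish that $\{\mc P^V_t, t\geq 0\}$ really is a semigroup (so that the question of an infinitesimal generator makes sense), and then compute its generator by differentiating at $t=0$ and simplifying with the eigenfunction equation.

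For the semigroup property, I would use the defining identity $\mc P^V_t(f) = P^V_t(Ff)/(e^{\lambda_V t} F)$ and the fact that $\{P^V_t\}$ is a semigroup by Feynman–Kac. Writing
\begin{equation*}
\mc P^V_t\bigl(\mc P^V_s f\bigr)(x) = \frac{1}{e^{\lambda_V t}F(x)}\,P^V_t\!\left(F\cdot \frac{P^V_s(Ff)}{e^{\lambda_V s}F}\right)(x)
= \frac{P^V_t\bigl(P^V_s(Ff)\bigr)(x)}{e^{\lambda_V(t+s)}F(x)}= \mc P^V_{t+s}(f)(x),
\end{equation*}
we see the $F$'s cancel and $P^V_t\circ P^V_s=P^V_{t+s}$ gives the result. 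The identity $\mc P^V_t(F)=e^{\lambda_V t}F$ already ensured that $\mc P^V_t(1)=1$, so this is indeed a Markov (stochastic) semigroup.

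For the generator, I would apply $\frac{d}{dt}\Big|_{t=0}$ to the expression $\mc P^V_t(f) = P^V_t(Ff)/(e^{\lambda_V t} F)$, using the quotient/product rule together with the known fact (from Feynman–Kac) that $\frac{d}{dt}\big|_{t=0} P^V_t(g) = (L+V)g$ for $g\in C^2(M)$. This yields
\begin{equation*}
\mc L_V f \;=\; \frac{1}{F}(L+V)(Ff)\;-\;\lambda_V f,
\end{equation*}
which is the first equality in \eqref{gerVF}. Then I would expand $(L+V)(Ff) = \tfrac{1}{2}(Ff)'' + VFf$ using the product rule to split it into $\tfrac{1}{2}F''f + F'f' + \tfrac{1}{2}Ff'' + VFf$, divide by $F$, and observe that the eigenvalue equation $\tfrac{1}{2}F'' + VF = \lambda_V F$ exactly cancels the zero-order terms $\tfrac{F''}{2F}f + Vf - \lambda_V f$. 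What remains is $\tfrac{1}{2}f'' + (F'/F)f' = \tfrac{1}{2}f'' + (\log F)' f'$, matching the stated expression.

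The only subtle point — the one I would expect to need most care — is the interchange of the limit $t\to 0$ with the various operations (applying $P^V_t$ and dividing by $F>0$). This is handled by the smoothness of $F$ (differentiable, strictly positive on a compact manifold, hence bounded away from $0$) and by standard regularity for the Feynman–Kac semigroup applied to the $C^2$ function $Ff$; these facts are precisely what is cited from \cite{Tay2} and \cite{AGM} at the beginning of the section. Once that is in hand, the computation reduces to the algebraic manipulation above, and the identity of generators characterizes the semigroup uniquely.
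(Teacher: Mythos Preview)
Your proposal is correct and follows essentially the same approach as the paper: verify the semigroup property from that of $\{P^V_t\}$, differentiate $\mc P^V_t(f)=P^V_t(Ff)/(e^{\lambda_V t}F)$ at $t=0$ to obtain $\frac{1}{F}(L+V)(Ff)-\lambda_V f$, and then expand with the product rule and cancel zero-order terms using the eigenvalue equation. The only cosmetic difference is that the paper writes out the difference quotient explicitly as
\[
\frac{\mc P^V_t(f)(x)-f(x)}{t}=\frac{1}{e^{\lambda_V t}F(x)}\,\frac{P^V_t(Ff)(x)-(Ff)(x)}{t}+f(x)\,\frac{e^{-\lambda_V t}-1}{t},
\]
whereas you invoke the product/quotient rule directly; these are the same computation.
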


Notice that a process induced by this kind of infinitesimal generator corresponds to a Brownian Motion with non-homogeneous drift: $\pfrac{\partial}{\partial x}\log(F(x))$.

\begin{proof}
It is easy to see that
$\{\mc P^{V}_T,\,T\geq 0\}$ is a semigroup.
To prove that the infinitesimal generator \eqref{gerVF} is associated with this semigroup, we need to observe that
\begin{equation*}
\begin{split}
\frac{ \mc P^{V}_{t} ( f) (x)-f(x)}{t} = \frac{1}{e^{{\lambda_V} t} F(x)}\Bigg(\frac{P^{V}_{t} ( Ff) (x)-(Ff)(x)}{t} \Bigg)
+f(x)\Bigg(\frac{e^{-{\lambda_V} t}-1}{t}\Bigg)\,.
\end{split}
\end{equation*}
Taking the limit as $t$ goes to zero the expression above converges to
\begin{equation*}
\begin{split}
\frac{1}{F(x)}(L+V)(Ff)(x)-f(x)\lambda_V
\,.
\end{split}
\end{equation*}
In order to find the second expression in the statement of the proposition, we can rewrite the expression above as
$$\pfrac{1}{F(x)}\,\frac{1}{2}\pfrac{\partial^2}{\partial x^2}(Ff)(x)+(V(x)-\lambda_V)\,f=$$
\begin{equation}\label{LV1}
\frac{1}{2}\pfrac{\partial^2}{\partial x^2}f(x)+\pfrac{\pfrac{\partial}{\partial x} F(x)}{F(x)} \pfrac{\partial}{\partial x} f+\Big(V(x)-\lambda_V+\frac{1}{2}\pfrac{\ppfrac{\partial^2}{\partial x^2}F(x)}{F(x)}\Big)f(x)\,.
\end{equation}
Using that $F$ is an eigenfunction associated with the eigenvalue $\lambda_V$, we have that
$$V(x)-\lambda_V= -\frac{1}{2}\pfrac{\partial^2}{\partial x^2}F(x)/F(x)\,.$$
Then the last expression in \eqref{LV1} becomes the operator $\mc L_V(f)(x)$, defined in \eqref{gerVF}.

\end{proof}

From now on, we will elaborate on the properties of initial invariant probability $\mu_{V}$, for the operator $\mc L_V$. In other words, $\mu_V$ {\bf will be a probability in $ M$ such that,
for any $f\in\mc C^2( M)$, we have for any $t\geq 0$}
\begin{equation*}
\int \mc P^{V}_t(f) \, \,\mbox{d}\mu_{V}\,=\,\int f\, \,\mbox{d}\mu_{V}\,
\quad
\mbox{or equivalently} \quad
\int \mc L_{V}(f) \, \,\mbox{d}\mu_{V}\,=\,0\,.
\end{equation*}

Since $\mc L_V(f)(x)=\frac{1}{2}\pfrac{\partial^2}{\partial x^2}f(x)+\pfrac{\partial}{\partial x}\log(F(x))\pfrac{\partial}{\partial x}f(x)$ (see \eqref{gerVF}), the following lemma will give us the invariant measure.

\begin{lemma}\label{inv}
Let $G\in C^1( M)$ and define an operator $A: C^2( M)\to\bb R$ as
$$A(f)=\frac{1}{2}\pfrac{\partial^2}{\partial x^2}f+\pfrac{\partial}{\partial x}G\pfrac{\partial}{\partial x}f,$$
for all $f\in C^2( M)$. Then a measure $\mu$ such that $\frac{d\mu}{dx}=e^{2G}$ satisfies
$$\int Af\,d\mu=0,$$
for all $f\in C^2( M)$.
\end{lemma}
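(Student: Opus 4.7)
The plan is to verify the identity by direct computation, exploiting the fact that $M = \mathbb{S}^1$ is a compact manifold without boundary so integration by parts produces no boundary terms. The key observation that makes the calculation short is that the density $e^{2G}$ is engineered precisely so that $A(f)\,e^{2G}$ becomes an exact derivative.

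First I would rewrite the integrand. By the product rule,
\[
\frac{d}{dx}\!\left(f'(x)\,e^{2G(x)}\right) \;=\; f''(x)\,e^{2G(x)} + 2\,f'(x)\,G'(x)\,e^{2G(x)},
\]
so that
\[
\tfrac{1}{2}\frac{d}{dx}\!\left(f'(x)\,e^{2G(x)}\right)
\;=\; \Bigl(\tfrac{1}{2} f''(x) + G'(x)\,f'(x)\Bigr)\,e^{2G(x)}
\;=\; A(f)(x)\,e^{2G(x)}.
\]
Thus $A(f)\,\frac{d\mu}{dx}$ is the derivative of the smooth periodic function $\tfrac{1}{2}\,f'\,e^{2G}$ on $\mathbb{S}^1$.

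Then I would conclude by integrating over $\mathbb{S}^1$: since $f \in C^2(M)$ and $G \in C^1(M)$, the function $f'\,e^{2G}$ is continuous and periodic, so
\[
\int_M A(f)\,d\mu \;=\; \int_{\mathbb{S}^1} \tfrac{1}{2}\frac{d}{dx}\!\left(f'(x)\,e^{2G(x)}\right) dx \;=\; 0.
\]
Equivalently, one can view this as integration by parts applied to $\int \tfrac{1}{2} f''\,e^{2G}\,dx$, which transforms it into $-\int f'\,G'\,e^{2G}\,dx$ and cancels against the drift term $\int G'\,f'\,e^{2G}\,dx$.

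There is no real obstacle here: the statement is essentially the computation that the adjoint of $A$ with respect to Lebesgue measure sends $e^{2G}$ to $0$, which is the continuous analogue of reversibility. The only mild subtlety is ensuring one works on the compact manifold without boundary so the boundary terms vanish; on a general Riemannian $M$ one would invoke the divergence theorem in the same way. For the general $M$ case alluded to earlier in the paper, the analogous density would be $e^{2G}$ times the Riemannian volume form, and the same exact-derivative argument goes through via $\mathrm{div}(e^{2G}\nabla f) = e^{2G}(\Delta f + 2\langle \nabla G, \nabla f\rangle)$.
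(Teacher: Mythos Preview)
Your proof is correct and follows essentially the same approach as the paper: the paper's proof simply says it follows from the Radon--Nikodym theorem and integration by parts, which is exactly what you have carried out in detail by recognizing $A(f)\,e^{2G}$ as the exact derivative $\tfrac{1}{2}\frac{d}{dx}(f'\,e^{2G})$.
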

\begin{proof}
This proof follows from the Radon-Nikodym Theorem and integration by parts.
\end{proof}

Thus, taking $G=\log F$, we get that $\tilde{\mu}_V$ {\bf  defined by $\frac{d\tilde{\mu}_V}{dx}=F^2$ is the invariant measure for $\mc L_V$.
This measure is not necessarily a probability,} then we will consider the normalized measure
\begin{equation}\label{mu_V}
d\mu_V(x)=\frac{F^2(x)}{\gamma_V}\,dx\,,
\end{equation}
where $\gamma_V=\int_{ M}F^2(x)\,dx$.

\begin{remark}
There is another way to find an invariant measure for $\mc L_V$. One can find first  {\bf an eigenprobability} $\nu_V$ of $L^*+V$ associated with eigenvalue $\lambda_V$.
Then consider
\begin{equation*}
\mu_{V}=F\nu_V,
\end{equation*} where $F$ is be the eigenfunction associated with eigenvalue $\lambda_V$. Since $\mc L_{V}(f) (x)=\frac{1}{F(x)}(L+V)(Ff)(x)-f(x)\lambda_V$, we have
\begin{equation} \label{ga}
\int \mc L_{V}(f) \, \,\mbox{d}\mu_{V}\,=\,\int\Big((L+V)(Ff) -Ff\,\lambda_V\Big)\, \,\mbox{d}\nu_{V}\,=\,0\,.
\end{equation}

\end{remark}

\begin{definition} Given a Lipschitz function $V: M\to\bb R$,
we define a continuous-time Markov process $\{Y^{V}_t, t\geq 0\}$ with state-space $ M$ whose
infinitesimal generator $\mc L_{V}$ acts on functions $f\in C^2( M)$ by the expression
\eqref{gerVF} and the initial stationary probability $\mu_{V}$ defined in \eqref{mu_V}.
We call this process $\{Y^{V}_t, t\geq 0\}$ the continuous time Gibbs state for the potential $V$.
This process induces a probability $\bb P^V_{\mu_{V}}$ on the space $\mc C$, which we call the Gibbs probability
for the potential $V$.
\end{definition}

\begin{remark} Suppose $V$ is of class $C^\infty$ and has a finite number of points with derivative zero.
Let $\lambda$ be the biggest eigenvalue of $L+V$ and $F$ be the eigenfunction associated with $\lambda$.
One can show an interesting property relating oscillations of $V$ and the oscillations of the main eigenfunction $F$.

Suppose that $V:\bb S^1\to \mathbb{R}$ has only two points with derivative zero ($V$ has a unique point of maximum and a unique point of minimum). Then, the eigenfunction $F$ has less than four points with derivative zero.

From the hypothesis on given a value $c$ there exist at most two values $x$ such that $V(x)=c.$
Suppose $F$ has many values with derivative zero. Then, between each two of these points there exists another one $x_1$ with
$F''(x_1)=0$. From, $F''(x_1) + V(x_1)F(x_1) = \lambda F(x_1)$ we get that $V(x_1)=\lambda$. But, by hypothesis one can get at most two points with this property.

One can generalize this for $V$ with more oscillations in a similar way. The analogous property for potentials and eigenfunctions in the setting where the state space has no differentiable structure is not so clear how to get it.

\end{remark}

\medskip

\section{Relative Entropy, Pressure and the equilibrium state for the potential $V$} \label{dois}

One can ask: \textquotedblleft Does the Gibbs state (of the last section) satisfy a variational principle?" We will address this question in the present section.

Given a Lipschitz function $V: M\to \bb R$, we
will consider a {\it variational problem in the continuous-time setting},
which is analogous to the pressure problem in the discrete-time setting (thermodynamic formalism).
This requires a meaning for {\it entropy}. A continuous-time stationary Markov process,
which maximizes our variational problem, will be the {\it continuous-time equilibrium state for $V$}.
The different probabilities $\tilde{\bb P}_{x} $
on $\mc C$ will describe the possible candidates for being the {\it stationary equilibrium continuous-time Markov process for $V$}. These probabilities will be called admissible.

First of all, we will analyze the Radon-Nikodym derivative of $\bb P^V_{x}$ {\bf (obtained from the generator $\mathcal{L}^V$ and the initial probability $\delta_x$, where $x \in M$)} with respect to the measure $\bb P_{x}$ induced by
the Brownian Motion and initial probability $\delta_x$ restricted to $\mc F_t$, where $\{\mc F_t,\,t\geq 0\}$ is the canonical filtration for the Brownian Motion $\{X_t,\,t\geq 0\}$. The Radon-Nikodym derivative will be denoted as $\frac{d\bb P^V_{x}}{d\bb P_{x}}\Big|_{\mc F_t}$, {\bf where $\delta_x$, for any $x\in M$, is taken as  the initial probability in $ M$}.
In order to find an expression to $\frac{d\bb P^V_{x}}{d\bb P_{x}}\Big|_{\mc F_t}$, we remember that it must satisfy
\begin{equation*}
\begin{split}
&\int_{\mc C} G(w_{T_1},w_{T_2},\dots, ,w_{T_k})\,d\bb P^V_{x}(w)=
\int_{\mc C} G(w_{T_1},w_{T_2},\dots, ,w_{T_k})\,\pfrac{d\bb P^V_{x}}{d\bb P_{x}}\Big|_{\mc F_t}\,d\bb P_{x}(w),
\end{split}
\end{equation*}
for all $k\in \bb N$, $0=T_0< T_1<\dots<T_k= t<T$ and $G:( M)^k\to \bb R$ {\bf measurable}.
For this is enough to consider, for any $k\in\bb N$, functions $f_i: M\to \bb R$, $i\in\{1,\dots, k\}$, a time partition as above and, {\bf given $x \in M$}, study the following integral: 
{\tiny \begin{equation*}
\begin{split}
&\int_{\mc C} f_1(w_{T_1})f_2(w_{T_2})\dots f_k(w_{T_k})\,d\bb P^V_{x}(w)\\&= \int_{ M}P_{T_1}(x,x_1)f_1(x_1)\int_{ M}P_{T_2-T_1}(x_1,x_2)f_2(x_2)\dots\int_{ M}P_{T_k-T_{k-1}}(x_{k-1},x_k)f_k(x_k) d x_1...d x_k\\
&= \int_{ M}P_{T_1}(x,x_1)f_1(x_1)\dots\int_{ M}P_{T_{k-1}-T_{k-2}}(x_{k-2},x_{k-1})f_{k-1}(x_{k-1})\mc P^V_{T_k-T_{k-1}}(f_k)(x_{k-1})d x_1...d x_k\\
&=\cdots\\
&= \,\mc P^V_{T_1}(f_1\dots(\mc P^V_{T_{k-1}-T_{k-2}}(f_{k-1}\mc P^V_{T_k-T_{k-1}}(f_k)))\dots)(x)
\end{split}
\end{equation*},}
{\bf where $P_t (x,y)$ is the transition kernel for $\mathcal{P}_t^V$ (see \cite{AGM})}

To fix ideas consider $k=2$, {\bf two times} $T_1<T_2$, and analyze:
\begin{equation*}
\begin{split}
&\mc P^V_{T_1}\big(f_1(\mc P^V_{T_{2}-T_{1}}(f_{2})\big)(x),
\end{split}
\end{equation*}where for $T_1<T_2$

{\tiny
\begin{equation*}
\begin{split}
&\mc P^V_{T_1}\big(f_1(\mc P^V_{T_{2}-T_{1}}(f_{2})\big)(x)\\&=
\bb E_{x}\Bigg[ e^{\int_0^{T_1}\, V(X_r) dr}\, \frac{F(X_{T_1})}{e^{{\lambda_V} T_1}\, F(x_0)}\, f_1( X_{T_1})\;\bb E_{X_{T_1}}\Big[ e^{\int_0^{T_2-T_1}\, V(X_r) dr}\, \frac{F(X_{T_2-T_1})}{e^{{\lambda_V} (T_2-T_1)}\, F(X_{T_1})}\, f_2( X_{T_2-T_1})\Big]\Bigg]\\
\\&=
\bb E_{x}\Bigg[ e^{\int_0^{T_1}\, V(X_r) dr}\, \frac{1}{e^{{\lambda_V} T_2}\, F(x_0)}\, f_1( X_{T_1})\;\bb E_{X_{T_1}}\Big[ e^{\int_0^{T_2-T_1}\, V(X_r) dr}\, F(X_{T_2-T_1})\, f_2( X_{T_2-T_1})\Big]\Bigg]\\
&=^{using Markov Property}
\bb E_{x}\Bigg[ e^{\int_0^{T_1}\, V(X_r) dr}\, \frac{1}{e^{{\lambda_V} T_1}\, F(x_0)}\, f_1( X_{T_1})\; e^{\int_{T_1}^{T_2}\, V(X_r) dr}\, F(X_{T_2})\, \, f_2( X_{T_2})\Bigg]\\
&=
\bb E_{x}\Bigg[f_1( X_{T_1})\, f_2( X_{T_2})\; e^{\int_0^{T_2}\, V(X_r) dr}\, \frac{F(X_{T_2})}{e^{{\lambda_V} T_2}\, F(x_0)}\, \Bigg].
\end{split}
\end{equation*}}

In this case,
\begin{equation*}
\begin{split}
&\int_{\mc C} f_1(w_{T_1})f_2(w_{T_2})\,d\bb P^V_{\mu}(w)\\&=\int_{\mc C} f_1( w_{T_1})\, f_2( w_{T_2})\, e^{\int_0^{T_2}\, V(w_r) dr}\, \pfrac{F(w_{T_2})}{e^{{\lambda_V} T_2}\, F(w_0)}\,d\bb P_{\mu}(w)\,.
\end{split}
\end{equation*}
Remembering that $T_k=t$, we have that, for all $T>t\geq 0$,
$$ \pfrac{d\bb P^V_{x}}{d\bb P_{x}}\Big|_{\mc F_t}(w)\,=$$
\begin{equation}\label{rn}
\,\exp\Big\{\,\log F(w_{t})\,-\,\log F(w_0)\,-\int_0^{t}\, (\lambda_V-V(w_r)) \,dr\,\Big\},\quad\bb P_{x}-a.s.
\end{equation}
or, using another notation,
\begin{equation*}
\pfrac{d\bb P^V_{x}}{d\bb P_{x}}\Big|_{\mc F_t}\,=\,\exp\Big\{\,\log F(X_{t})\,-\,\log F(X_0)\,-\int_0^{t}\, (\lambda_V-V(X_r)) \,dr\,\Big\},
\end{equation*}
where $\{X_s,s\geq 0\}$ is the Brownian Motion.

\begin{definition}\label{admissible}
The probability $\tilde{\bb P}_{\mu} $ on $\mc C$
is called admissible, if, for all $T\geq 0$,
\begin{equation}\label{rntilde}
\pfrac{d\tilde{\bb P}_{x} }{d\bb P_{x} }\Big|_{\mc F_T}\,=\,\,\exp\Big\{g(X_T)-g(X_0)- \frac{1}{2}\int_{0}^T\Big[ \pfrac{\partial^2}{\partial x^2}g(X_r)+(\pfrac{\partial}{\partial x}g)^2(X_r)\Big]\,dr\,\Big\}\,,
\end{equation}
for some function $g\in C^2( M)$.
\end{definition}


Notice that according to the above the Gibbs Markov process $\bb P^V_{x}$ with the initial probability $\delta_x$
is admissible, it is enough to take $g=\log F$ and observe that
\begin{equation}\label{pop}
\frac{1}{2}[\pfrac{\partial^2}{\partial x^2}g+(\pfrac{\partial}{\partial x}g)^2]
= \frac{1}{2}[\pfrac{\partial}{\partial x}\Big(\frac{\pfrac{\partial}{\partial x}F}{F}\Big)+\Big(\frac{\pfrac{\partial}{\partial x}F}{F}\Big)^2]=\frac{\frac{1}{2}\,\pfrac{\partial^2}{\partial x^2}F}{F}=\frac{LF}{F}=\lambda_V-V\,.
\end{equation}
The last equality is due to $(L+V)F=\lambda_VF$.
And, the unperturbed system with the initial measure $\delta_x$ is also admissible, just take $g=0$.

Denote by $\{\tilde{X}_t,\,t\geq 0\}$ which has law in $\mc C$ the probability $\tilde{\bb P}_x$.
The question is: who is the process $\{\tilde{X}_t,\,t\geq 0\}$?
The answer is in \cite[Chapter VIII.3]{RY} thanks to Girsanov's Theorem.
More specifically, by Proposition 3.4 in \cite[Chapter VIII.3]{RY} the infinitesimal generator of the process $\{\tilde{X}_t,\,t\geq 0\}$ is
$\tilde{L}=L+\Gamma(g,\cdot)$, where $\Gamma$ is the {\emph{op\'erateur carr\'e du champ}} defined in $C^2( M)\times C^2( M)$ as
$$\Gamma(f,g)= L(fg)-fLg-gLf.$$
Since
$L=\frac{1}{2}\,\frac{\partial^2}{\partial x^2}$, the {\emph{op\'erateur carr\'e du champ}} is just
$$\Gamma(f,g)= \pfrac{\partial}{\partial x}g\pfrac{\partial}{\partial x}f.$$
Thus the Radom-Nikodym derivative $\frac{d\tilde{\bb P}_{x} }{d\bb P_{x} }$, defined in \eqref{rntilde}, can be rewritten as
\begin{equation*}
\pfrac{d\tilde{\bb P}_{x} }{d\bb P_{x} }\Big|_{\mc F_t}\,=\,\,\exp\Big\{g(X_t)-g(X_0)-\int_{0}^t\big[ Lg(X_r)+\Gamma(g,g)(X_r)\big]\,dr\,\Big\}\,.
\end{equation*}
By \cite{RY}, if the process has a Radon-Nikodym derivative with respect to $\bb P_x$ as above, the generator of this process is $\tilde{L}=L+\Gamma(g,\cdot)$.

The conclusion is that in our model the generator of $\{\tilde{X}_t,\,t\geq 0\}$ acts on functions $f\in C^2( M)$ as
\begin{equation}\label{tilde L}
\tilde{L}f= \frac{1}{2}\pfrac{\partial^2}{\partial x^2}f+\pfrac{\partial}{\partial x}g\pfrac{\partial}{\partial x}f.
\end{equation}
Then, the process $\{\tilde{X}_t,\,t\geq 0\}$ is a Brownian Motion with drift $\pfrac{\partial}{\partial x}g$, i.e., this process is in the same class of the Gibbs Markov process $\bb P^V_{x}$ as it should be.

{\bf Now we will find an invariant measure for $\tilde{L}$, which we will denote by $\tilde{\mu}$.
By Lemma \ref{inv}, the invariant measure $\tilde{\mu}$ for $\tilde{L}$ is such that $d\tilde{\mu}(x)=e^{2g(x)}/\tilde{\gamma}\,dx$, where $\tilde{\gamma}=\int_{ M}e^{2g(x)}\,dx$.}

\bigskip

Now, we want to give a meaning for the relative entropy of any admissible probability $\tilde{\bb P}_{\tilde{\mu}}$ with respect to $\bb P_{\tilde{\mu}}$, \textbf{where} 
$\tilde{\bb P}_{\tilde{\mu}}[A]=\int_M \tilde{\bb P}_{x}[A]\,d\tilde{\mu}(x)$ and ${\bb P}_{\tilde{\mu}}[A]=\int_M {\bb P}_{x}[A]\,d\tilde{\mu}(x)$, \textbf{for all mensurable set $A$ in the Skorohod space. Observe that, if we choose the initial measure $\tilde{\mu}$ on $M$ as $\delta_{x_0}$ (the Dirac measure at the point $x_0\in M$, then  we have ${\bb P}_{\delta_{x_0}}={\bb P}_{x_0}$ what totally agrees with we introduced in the beginning of the Section 2. }

The reason why we use the same initial measure for both processes is that
we need that the associated probabilities, $\tilde{\bb P}_{\tilde{\mu}}$ and $\bb P_{\tilde{\mu}}$,
on $\mc C$ are absolutely continuous with respect to each other.
Anyway, the final numerical result for the value of entropy will not depend on the common $\tilde{\mu}$ we chose as the initial probability.

For a fixed $T\geq 0$, we consider the relative entropy
of the $\tilde{\bb P}_{\tilde{\mu}}$,
with respect to $\bb P_{\tilde{\mu}}$ up to time $T\geq 0$
as
\begin{equation}\label{entropy}
H_T(\tilde{\bb P}_{\tilde{\mu}}\vert\bb P_{\tilde{\mu}})\,=-\,\int_{ M}\int_{\mc C}
\log\Bigg(\frac{\mbox{d}\tilde{\bb P}_x}{\mbox{d}\bb P_x}\Big|_{\mc F_T}\Bigg)(\omega)\,
\mbox{d}\tilde{\bb P}_x(\omega)\,\mbox{d}{\tilde{\mu}}(x)\,.
\end{equation}

Using the property that the logarithm is a concave function and Jensen's inequality, we obtain that
for any $g$ we have $\int \log g \,\mbox{d} \mu\leq \log \int g \,\mbox{d} \mu$. Then
$H_T(\tilde{\bb P}_{\tilde{\mu}}\vert\bb P_{\tilde{\mu}})\leq 0$.
Negative entropies appear naturally when one analyzes a dynamical system with the property that
each point has an uncountable number of preimages (see \cite{LMMS} and \cite{LMS}).

Using the expression \eqref{rntilde}, we can rewrite the entropy $H_T(\tilde{\bb P}_{\tilde{\mu}}\vert\bb P_{\tilde{\mu}})$ as
\begin{equation}\label{17}
\begin{split}
\,\,&-
\int_{\mc C}\Big[g(w_T)-g(w_0)-\frac{1}{2}\,\int_{0}^T\Big[ \pfrac{\partial^2}{\partial x^2}g(w_r)+(\pfrac{\partial}{\partial x}g)^2(w_r)\Big]\,dr\Big]\,d\tilde{\bb P}_{\tilde{\mu}}(w)\\=\,&\int_{ M}\Big\{\tilde{P}_0g(x)-\tilde{P}_Tg(x)+\frac{1}{2}\int_0^T \tilde{P}_r[\pfrac{\partial^2}{\partial x^2}g+(\pfrac{\partial}{\partial x}g)^2](x)\,dr\Big\}\,d {\tilde{\mu}}(x)\,,
\end{split}
\end{equation}
where $\tilde{P}_t$ is the semigroup associated with $\tilde{L}$.

From the previous expression and ergodicity, we get that there exists the limit $\lim_{T\to \infty} \frac{1}{T} H_T(\tilde{\bb P}_{\tilde{\mu}}\vert\bb P_{\tilde{\mu}})$.

\begin{definition}\label{12}
For a fixed initial probability ${\tilde{\mu}}$ on $ M$, we will denote the limit
\begin{equation*}
\lim_{T\to \infty} \pfrac{1}{T} H_T(\tilde{\bb P}_{\tilde{\mu}}\vert\bb P_{\tilde{\mu}})
\end{equation*}
as $H(\tilde{\bb P}_{\tilde{\mu}}\vert\bb P_{\tilde{\mu}})$.
Moreover, we will call $H(\tilde{\bb P}_{\tilde{\mu}}\vert\bb P_{\tilde{\mu}})$ as the \emph{relative entropy} of the measure $\tilde{\bb P}_{\tilde{\mu}}$ with respect to the measure $\bb P_{\tilde{\mu}}$.
\end{definition}

By the Definition \ref{12}, the expression \eqref{17} and the Ergodic Theorem {\bf (see Theorem 1.14 in \cite{Loche})}, the relative entropy
\begin{equation*}
\begin{split}
H(\tilde{\bb P}_{\tilde{\mu}}\vert\bb P_{\tilde{\mu}})=\frac{1}{2}\int_{ M}\Big[\pfrac{\partial^2}{\partial x^2}g+(\pfrac{\partial}{\partial x}g)^2\Big]\,d\tilde{\mu}\,.
\end{split}
\end{equation*}

\begin{definition} For a given Lipschitz potential $V$, we denote the Pressure (or, Free Energy) of $V$ as the value
\begin{equation*}
\textbf{P}(V):= \sup_{\at{\tilde{\bb P}_{\tilde{\mu}}}{\text{admissible}}}\Big\{
\,H(\tilde{\bb P}_{\tilde{\mu}}\vert\bb P_{\tilde{\mu}})\, +\,
\int_{ M}V\, \mbox{d}\tilde{\mu}\Big\}\,,
\end{equation*}
where $\tilde{\mu}$ is the initial stationary probability for the infinitesimal generator $\tilde{L}$, defined in \eqref{tilde L}.
Moreover, any admissible element that maximizes $\textbf{P}(V)$ is called a continuous-time equilibrium state for $V$.
\end{definition}

Finally, we can state the main result of this section:
\begin{proposition}
The pressure of the potential $V$ is given by
\begin{equation*}
\textbf{P}(V)\,=\,H(\bb P^V_{\mu_V}\vert\bb P_{\mu_V})+
\int_{ M}V\,\mbox{d}\mu_{V}=\lambda_{V}\, .
\end{equation*}

Therefore, the equilibrium state for $V$ is the Gibbs state for $V$.
\end{proposition}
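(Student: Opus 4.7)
The plan is to reduce the variational problem to the Rayleigh--Ritz characterisation of $\lambda_V$ as the top eigenvalue of the self-adjoint operator $L+V$ on $L^2(dx)$. I would first verify that the claimed value $\lambda_V$ is attained at the Gibbs state. For $\bb P^V_{\mu_V}$ we have $g=\log F$, and by \eqref{pop} this choice satisfies $\tfrac{1}{2}\bigl[\partial_x^2 g+(\partial_x g)^2\bigr]=\lambda_V-V$. Substituting into the formula for relative entropy derived just before Definition \ref{12} gives
\[
H(\bb P^V_{\mu_V}\vert\bb P_{\mu_V})+\int_{M} V\,d\mu_V=\int_{M}(\lambda_V-V)\,d\mu_V+\int_{M} V\,d\mu_V=\lambda_V,
\]
so $\textbf{P}(V)\ge\lambda_V$ and the Gibbs state attains this value.

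For the opposite inequality, fix any admissible $\tilde{\bb P}_{\tilde\mu}$, parametrised by some $g\in C^2(M)$ with invariant probability $d\tilde\mu=e^{2g}/\tilde\gamma\,dx$, $\tilde\gamma=\int_M e^{2g}\,dx$. I would change variables via $\phi:=e^g>0$. A direct calculation from $\partial_x\phi=\phi\,\partial_x g$ and $\partial_x^2\phi=\phi\bigl[\partial_x^2 g+(\partial_x g)^2\bigr]$ produces the key identity
\[
\tfrac{1}{2}\bigl[\partial_x^2 g+(\partial_x g)^2\bigr]=\frac{L\phi}{\phi}.
\]
Since $d\tilde\mu=\phi^2/\tilde\gamma\,dx$, the pressure functional collapses to
\[
H(\tilde{\bb P}_{\tilde\mu}\vert\bb P_{\tilde\mu})+\int_{M} V\,d\tilde\mu=\frac{1}{\tilde\gamma}\int_M\bigl[(L+V)\phi\bigr]\phi\,dx=\frac{\langle(L+V)\phi,\phi\rangle_{L^2(dx)}}{\langle\phi,\phi\rangle_{L^2(dx)}}\,,
\]
i.e.\ the Dirichlet-form Rayleigh quotient of $L+V$.

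Now I would invoke that $L=\tfrac{1}{2}\partial_x^2$ is self-adjoint on $L^2(dx)$, so $L+V$ is a self-adjoint Schr\"odinger-type operator whose largest eigenvalue is $\lambda_V$, with positive eigenfunction $F$. By Rayleigh--Ritz / the max--min principle,
\[
\frac{\langle(L+V)\phi,\phi\rangle}{\langle\phi,\phi\rangle}\le\lambda_V
\]
for every $\phi\in C^2(M)\setminus\{0\}$, with equality iff $\phi$ is a scalar multiple of $F$. Taking the supremum over admissible probabilities yields $\textbf{P}(V)\le\lambda_V$; combined with the previous paragraph this gives equality. Equality in Rayleigh--Ritz forces $g=\log F+c$, which produces the same invariant probability $\mu_V$ and the same generator $\mc L_V$, hence the same Markov process $\bb P^V_{\mu_V}$, so the Gibbs state is identified as the continuous-time equilibrium state.

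The step I expect to be the real content is the algebraic identity $\tfrac{1}{2}[\partial_x^2 g+(\partial_x g)^2]=L\phi/\phi$ under $\phi=e^g$: this is what collapses a seemingly awkward functional of $g$ into a Dirichlet form of $L+V$, at which point the variational principle is just the standard min--max characterisation of the principal eigenvalue of a self-adjoint operator. The rest is bookkeeping between the invariant density $e^{2g}/\tilde\gamma$ and the Lebesgue inner product that makes $L+V$ self-adjoint.
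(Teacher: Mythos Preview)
Your proof is correct, and the first half (showing the Gibbs state attains $\lambda_V$) matches the paper exactly. For the upper bound, however, you take a genuinely different route from the paper.

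The paper never introduces the substitution $\phi=e^g$ or the Rayleigh quotient. Instead it works directly with $g$: after an integration by parts it rewrites the functional as $\frac{1}{\tilde\gamma}\int_M\bigl[V-\tfrac{1}{2}(\partial_x g)^2\bigr]e^{2g}\,dx$, then substitutes $V=\lambda_V-LF/F$ and integrates by parts once more to obtain
\[
\lambda_V-\frac{1}{2\tilde\gamma}\int_M\bigl(\partial_x\log F-\partial_x g\bigr)^2 e^{2g}\,dx,
\]
which is manifestly $\le\lambda_V$ with equality iff $\partial_x g=\partial_x\log F$. So the paper's argument is a hands-on completion of the square that produces an explicit nonnegative defect, while yours is the more conceptual observation that the functional \emph{is} the Rayleigh quotient of the self-adjoint operator $L+V$, after which the bound and the equality case follow from the variational characterisation of the top eigenvalue. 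Your approach is cleaner and makes the link to spectral theory transparent; the paper's approach is entirely self-contained and yields the explicit gap term without appealing to any external principle. Both ultimately rely on the same algebraic identity $\tfrac{1}{2}[\partial_x^2 g+(\partial_x g)^2]=Le^g/e^g$, which the paper uses only in the special case $g=\log F$ (equation \eqref{pop}) but you exploit for general $g$.
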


\begin{proof}

The second equality in the statement of the theorem comes from
$$H(\bb P^V_{\mu_V}\vert\bb P_{\mu_V})+
\int_{ M}V\,\mbox{d}\mu_{V}=  $$
\begin{equation*}
\int_{ M}\frac{1}{2}\Big[\pfrac{\p^2}{\p_x^2}\log F+(\pfrac{\p}{\p_x}\log F)^2]+V\Big]\,\mbox{d}\mu_{V}=\int_{ M}\pfrac{LF}{F}+V\,\mbox{d}\mu_{V}=\lambda_V\,,
\end{equation*}
by \eqref{pop}  and $(L+V)F=\lambda_VF$.

To finish the proof, we need to analyze
\begin{equation}\label{enttilde}
\,H(\tilde{\bb P}_{\tilde{\mu}}\vert\bb P_{\tilde{\mu}})\, +\,
\int_{ M}V\, d\tilde{\mu},
\end{equation}
which is equal to
\begin{equation*}
\begin{split}
    &\frac{1}{\tilde{\gamma}}\frac{1}{2}\int_{ M}\Big[\pfrac{\partial^2}{\partial x^2}g+(\pfrac{\partial}{\partial x}g)^2\Big]\,e^{2g}\,dx\, +\,
\frac{1}{\tilde{\gamma}}\int_{ M}V\,\,e^{2g}\,dx\\=&\frac{1}{\tilde{\gamma}}\int_{ M}\Big[V-\frac{1}{2}(\pfrac{\partial}{\partial x}g)^2\Big]\,e^{2g}\,dx.
\end{split}
\end{equation*}
The last equality follows from integration by parts and the expression of $\tilde{\mu}$.
Using that $V=\lambda_V-\pfrac{LF}{F}$, we can rewrite the last integral above as
\begin{equation*}
\lambda_V+\frac{1}{\tilde{\gamma}}\frac{1}{2}\int_{ M}\Big[-\frac{\pfrac{\partial^2}{\partial x^2} F}{F}-(\pfrac{\partial}{\partial x}g)^2\Big]\,e^{2g}\,dx.
\end{equation*}
Applying integration by parts, the integral above becomes
\begin{equation*}
\frac{1}{2}[\,\int_{ M}\pfrac{\partial}{\partial x} F\pfrac{\partial}{\partial x} \Big(\frac{e^{2g}}{F}\Big)\,dx\,-
\int_{ M}(\pfrac{\partial}{\partial x}g)^2\,e^{2g}\,dx\,].
\end{equation*}
The expression above can be rewritten as
\begin{equation*}
-\frac{1}{2}\,\int_{ M}\Big(\pfrac{\partial}{\partial x}(\log F)-\pfrac{\partial}{\partial x}g\Big)^2\,e^{2g}\,dx.
\end{equation*}
Therefore, the expression in \eqref{enttilde} is less than or equal to $\lambda_V$.

\end{proof}
\bigskip

\section{Appendix}\label{a1}
\begin{lemma}Let $\mu$ a measure on $M$ such that the infinitesimal generator $L$ is selfadjoint in $L^2(\mu)$, that is,    \begin{equation}\label{sim2}\int_M(Lf)(x)g(x)\,d\mu(x)=\int_Mf(x)(Lg)(x)\,d\mu(x), \end{equation}
for all continuous functions $f,g:M\to \bb R$. Consider $\bb P_{\mu}$ the {\it a priori}
probability, which is induced by the initial measure $\mu$ and the infinitesimal generator $L$. And. denote by $\bb E_{\mu}$ the expectation concerning to the $\bb P_{\mu}$. Then, we have
  \begin{equation}\label{sim3}
  \begin{split}
      \int_{\mc C}e^{\int_0^{t} V(w(r))\,dr}& f(w(t))\,g(w(0))\,d\bb P_{\mu}(w) \\&=\int_{\mc C} e^{\int_0^{t} V(w(r))\,dr} f(w(0))\,g(w(t))\,d\bb P_{\mu}(w) \,,
  \end{split}
\end{equation}
for all continuous functions $f,g:M\to \bb R$.
\end{lemma}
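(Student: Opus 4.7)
The plan is to reduce \eqref{sim3} to the self-adjointness of the Feynman-Kac semigroup $P_t^V$ on $L^2(M,\mu)$ and then to derive that self-adjointness from \eqref{sim2} by combining a finite-dimensional reversibility argument with a Riemann-sum approximation of the exponential factor. Using \eqref{explain_measure} together with the definition \eqref{0} of $P_t^V$, the two integrals in \eqref{sim3} become respectively $\int_M g(x)\,P_t^V(f)(x)\,d\mu(x)$ and $\int_M f(x)\,P_t^V(g)(x)\,d\mu(x)$, so the lemma reduces to showing $\langle g,P_t^V f\rangle_\mu = \langle f,P_t^V g\rangle_\mu$ for all continuous $f,g$.

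The key step is to show that, under $\bb P_\mu$, the finite-dimensional distributions of the Brownian motion are reversible: for every $0\leq s_0<s_1<\cdots<s_n\leq t$ the tuples $(X_{s_0},\dots,X_{s_n})$ and $(X_{s_n},\dots,X_{s_0})$ have the same joint law. The case $n=1$ is exactly the self-adjointness of the heat semigroup $e^{sL}$ in $L^2(\mu)$; this follows from \eqref{sim2} by noting that $\frac{d}{ds}\langle e^{sL}f,g\rangle_\mu = \langle L e^{sL}f,g\rangle_\mu = \langle e^{sL}f,Lg\rangle_\mu = \frac{d}{ds}\langle f,e^{sL}g\rangle_\mu$, so that the difference $\langle e^{sL}f,g\rangle_\mu-\langle f,e^{sL}g\rangle_\mu$ vanishes identically since it equals $0$ at $s=0$. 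The inductive step from $n-1$ to $n$ then uses the Markov property at time $s_1$ to factor one conditional expectation and applies the $n=1$ symmetry together with the inductive hypothesis.

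Finally, I would approximate the Feynman-Kac exponential by Riemann sums on the uniform partition $t_i=it/n$. Setting $S_n(w):=\frac{t}{n}\sum_{i=0}^{n-1} V(w(t_i))$, one has $e^{S_n(w)}\to e^{\int_0^t V(w(r))\,dr}$ pointwise (by continuity of $V$ and of the paths) and $|e^{S_n(w)}|\leq e^{t\|V\|_\infty}$ uniformly. Writing the integrand on the left of \eqref{sim3} as $g(w(t_0))\prod_{i=0}^{n-1}e^{(t/n)V(w(t_i))}f(w(t_n))$ and applying the reversibility statement to $(w(t_0),\dots,w(t_n))$ simultaneously swaps $f$ with $g$ at the endpoints and converts $S_n(w)$ into $\frac{t}{n}\sum_{j=1}^{n}V(w(t_j))$, whose limit is again $\int_0^t V(w(r))\,dr$. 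Passing to the limit $n\to\infty$ by dominated convergence (valid thanks to the uniform bound $e^{t\|V\|_\infty}$ and continuity of $f,g$) gives \eqref{sim3}. The main technical obstacle is the passage from self-adjointness of the generator $L$ to reversibility of the finite-dimensional distributions of the process; the semigroup/induction route above is what allows one to bypass having to manipulate explicit transition kernels.
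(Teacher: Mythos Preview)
Your reduction of \eqref{sim3} to the identity $\langle g, P_t^V f\rangle_\mu=\langle f, P_t^V g\rangle_\mu$ is exactly how the paper begins. From there, however, the paper takes a much shorter route than your reversibility--plus--Riemann-sum argument: it simply observes that since multiplication by $V$ is a symmetric operator on $L^2(\mu)$, the self-adjointness of $L$ in \eqref{sim2} immediately gives self-adjointness of $L+V$, and therefore of the associated semigroup $P_t^V$. Notice that you already prove the implication ``self-adjoint generator $\Rightarrow$ self-adjoint semigroup'' for $e^{sL}$ via your derivative argument; applying that very same reasoning to $L+V$ in place of $L$ yields the paper's proof in one line and makes the whole finite-dimensional reversibility and Riemann-sum machinery unnecessary. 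Your longer route does work, with one caveat: the claim that $(X_{s_0},\dots,X_{s_n})$ and $(X_{s_n},\dots,X_{s_0})$ have the same law under $\bb P_\mu$ is \emph{not} true for arbitrary partitions (detailed balance reverses the order of the transition kernels, so the time gaps get reversed as well); it holds precisely when the gaps $s_i-s_{i-1}$ form a palindrome, which is the case for the uniform partition $t_i=it/n$ you actually use. So your argument is correct as applied, but the general reversibility statement should be restricted to equally spaced times. What your approach buys is a path-level picture of why the identity holds; what the paper's buys is brevity and the observation that the potential $V$ plays no real role in the symmetry.
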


\begin{proof}
We start this proof by observing that the equality \eqref{sim3} can be rewritten as 
    \begin{equation}\label{sim1}\bb E_{\mu} \Big[e^{\int_0^{t} V(X_r)\,dr} f(X_t)\,g(X_0)\Big] = \bb E_{\mu} \Big[e^{\int_0^{t} V(X_r)\,dr} f(X_0)\,g(X_t)\Big] \,.
\end{equation} Then our goal is to prove \eqref{sim1}, in order to do this we 
    use \eqref{explain_measure} in the left-hand side of \eqref{sim1}, and we obtain
    \begin{equation}\label{Aa1}
    \begin{split}
        \bb E_{\mu} \Big[e^{\int_0^{t} V(X_r)\,dr} f(X_t)\,g(X_0)\Big] &= 
  \int_M  \bb E_{x} \Big[e^{\int_0^{t} V(X_r)\,dr} f(X_t)\,g(X_0)\Big]\,d\mu(x)\\
  &= 
  \int_M  \bb E_{x} \Big[e^{\int_0^{t} V(X_r)\,dr} f(X_t)\Big]\,g(x)\,d\mu(x)\\
  &=\int_M (P^V_tf)(x)\, g(x)\,d\mu(x)\,,
    \end{split}
    \end{equation}
    where the last equality is due to the expression \eqref{0} for the semigroup $P^V_t$, which is associated to the infinitesimal generator $L+V$. By \eqref{sim2}, we have that $L+V$ is selfadjoint, that is, 
    \begin{equation*}
   \begin{split}
       & \int_M(L+V)(f)(x)\;g(x)\,d\mu(x)= \int_M(Lf)(x)g(x)\,d\mu(x)+\int_M V(x)f(x)g(x)\,d\mu(x)\\&=\int_Mf(x)\;(L+V)(g)(x)\,d\mu(x).
   \end{split}
    \end{equation*}
    Then the semigroup $P^V_t$, which is associated with $L+V$, is selfadjoint too. Thus,
      \begin{equation}\label{Aa2}
    \begin{split}
\int_M (P^V_tf)(x)\, g(x)\,d\mu(x)=\int_M f(x)\,(P^V_tg)(x)\,d\mu(x)\,,
    \end{split}
    \end{equation}
    for all continuous functions $f,g:M\to \bb R$. Writing  the semigroup $P^V_t$ with the expression \eqref{0} and  using \eqref{explain_measure}, we get
      \begin{equation}\label{Aa3}
    \begin{split}
\int_M f(x)\,(P^V_tg)(x)\,d\mu(x)&=\int_M f(x)\,\bb E_{x} \Big[e^{\int_0^{t} V(X_r)\,dr} g(X_t)\Big]\,d\mu(x)\\
&=\int_M \bb E_{x} \Big[e^{\int_0^{t} V(X_r)\,dr} g(X_t)\, f(X_0)\Big]\,d\mu(x)\\
&=\bb E_{\mu} \Big[e^{\int_0^{t} V(X_r)\,dr} g(X_t)\, f(X_0)\Big]\,.
    \end{split}
    \end{equation}
    Putting \eqref{Aa1}, \eqref{Aa2}, and \eqref{Aa3} together we obtain \eqref{sim1}.
\end{proof}

\begin{corollary}For all continuous function $f:M\to \mathbb R$, we have
    \begin{equation*}
   \begin{split}
        \int_{\mc C}e^{\int_0^{t} V(w(r))\,dr} &f(w(t))\,\textbf{1}_{w(0)=x}\,d\bb P_{\mu}(w) \\=&\int_{\mc C} e^{\int_0^{t} V(w(r))\,dr} f(w(0))\,\textbf{1}_{w(t)=x}\,d\bb P_{\mu}(w) \,.
   \end{split}
\end{equation*}
\end{corollary}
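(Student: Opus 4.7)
The plan is to obtain the corollary from the Lemma by reading the Lemma's identity weakly in the variable $x$. The expression $\int_{\mc C}(\cdots)\,\textbf{1}_{w(0)=x}\,d\bb P_\mu(w)$ should be interpreted as a density in $x$ produced by disintegrating $\bb P_\mu$ along the initial coordinate, namely via the identity $\bb P_\mu=\int_M\bb P_x\,d\mu(x)$; similarly for the final coordinate, using reversibility/selfadjointness to disintegrate along $w(t)$. Once this interpretation is fixed, the two sides of the corollary are ordinary measures (or densities) in $x$, and showing they agree reduces to showing they yield the same value when paired against an arbitrary continuous test function.

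Concretely, I would fix a continuous test function $g:M\to\bb R$ and integrate each side against $g(x)\,d\mu(x)$. For the left-hand side, Fubini together with the disintegration formula
\[
\int_M g(x)\,\Big[\int_{\mc C}h(w)\,\textbf{1}_{w(0)=x}\,d\bb P_\mu(w)\Big] = \int_{\mc C}h(w)\,g(w(0))\,d\bb P_\mu(w)
\]
(which is the very content of $\bb P_\mu=\int_M\bb P_x\,d\mu(x)$) applied with $h(w)=e^{\int_0^t V(w(r))\,dr}f(w(t))$ recovers the left-hand side of the Lemma's identity \eqref{sim3}. The right-hand side of the corollary, paired against $g(x)\,d\mu(x)$, similarly reduces to the right-hand side of \eqref{sim3}, using the analogous disintegration along $w(t)$: that is, the identity
\[
\int_M g(x)\,\Big[\int_{\mc C}h(w)\,\textbf{1}_{w(t)=x}\,d\bb P_\mu(w)\Big] = \int_{\mc C}h(w)\,g(w(t))\,d\bb P_\mu(w),
\]
which is again a disintegration statement and does not require any selfadjointness. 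The Lemma then equates the two weak pairings, and since $g$ was arbitrary continuous, the desired pointwise (density-level) equality follows.

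If one prefers to avoid invoking disintegration directly, an equivalent route is to approximate the Dirac mass at $x$ by a sequence $g_n$ of nonnegative continuous bumps concentrated near $x$ with suitable normalization, apply the Lemma to each $g_n$, and pass to the limit; continuity of $f$ and of the semigroup kernel ensures both sides converge to the desired integrals against $\textbf{1}_{w(0)=x}$ and $\textbf{1}_{w(t)=x}$ respectively.

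The main obstacle is purely notational: the event $\{w(0)=x\}$ has $\bb P_\mu$-measure zero in general, so the symbol $\textbf{1}_{w(0)=x}$ must be parsed through the disintegration convention rather than taken literally. Once this interpretation is pinned down, the passage from the Lemma to the corollary is essentially a Fubini argument and carries no analytic difficulty; the real work was already done in establishing the selfadjointness of $P^V_t$ used in the Lemma.
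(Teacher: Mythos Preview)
Your proposal is correct and aligns with the paper's own argument: the paper's proof is precisely your ``alternative route,'' namely applying the Lemma with smooth bumps $g_n\to \textbf{1}_x$ and passing to the limit. Your primary disintegration/test-function formulation is just the dual phrasing of the same idea and requires no additional input beyond the Lemma.
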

\begin{proof}
    Using the expression \eqref{sim3} with a smooth function $g_n$ and 
taking $g_n(y)\to \textbf{1}_{x}(y)$, we get the intended result.
\end{proof}


\bigskip




\begin{thebibliography}{99}
\footnotesize



\bibitem{ACR} D. Aguiar, L. Cioletti, R. Ruviaro,
A Variational Principle for the Specific Entropy for Symbolic Systems
with Uncountable Alphabets. Mathematische Nachrichten - V. 272, Issue 17-18,
2506-2515 (2018)


\bibitem{AGM}
D. Bakry, I. Gentil and M. Ledoux, Analysis and Geometry of Markov Diffusion Operators, Springer Verlag (2014)


\bibitem{Bo} A. Bobrowski,
Functional Analysis for Probability and Stochastic Processes.  Cambridge Press (2005)


\bibitem{BEL} A. Baraviera, R. Exel and A. Lopes,
A Ruelle Operator for continuous-time Markov
Chains, S\~ao Paulo Journal of Mathematical Sciences, vol 4 n. 1, pp 1-16 (2010)

\bibitem{Bov} A. Bovier  and F. den Hollander,
Metastability: A Potential-Theoretic Approach, Springer (2005)

\bibitem{EK}
S. N. Ethier and T. G. Kurtz, Markov processes - Characterization and Convergence, Wiley (2005)

\bibitem{dMo} P. del Moral,
Feynman-Kac Formulae, Springer Verlag (2004)

\bibitem{Dumi}
M. E. Dumitrescu,
Some informational properties of Markov pure-jump processes,
Casopis pro pestovani matematiky, Vol. 113 (1988), No. 4, 429-434


\bibitem{DV} 
M. Donsker and S.R. S. Varadhan, 
On a Variational Formula for the Principal Eigenvalue for Operators
with Maximum Principle, Proc. Nat. Acad. Sci. USA
Vol. 72, No. 3, pp. 780--783, March 1975

\bibitem{EgoS} Yu. Egorov and M. Shubin, Foundations of the Classical Theory of Partial Differential Equations, Spinger Verlag (1998)

\bibitem{Gira}
V. Girardin and  P. Regnault,
On the Estimation of Entropy for Markov Chains
Proceedings of 58th World Statistical Congress of Statistics, ISI. Dublin (2011)

\bibitem{Georgii} H-O Georgii,
Gibbs Measures and Phase Transitions, Ed. W. de Gruyter (2011)

\bibitem{Gomes} 
D. A. Gomes, A stochastic analogue of Aubry-Mather theory. Nonlinearity
15, 581--603, 2002

 \bibitem{KS} I. Karatzas and S. Shreve,  Brownian Motion and Stochastic Calculus, Springer Verlag (1991)


\bibitem{KT2} S. Karlin and H. Taylor, A Second Course in Stochastic Processes, Academic Press.

\bibitem{Kif}
Y. Kifer, Large Deviations in  Dynamical Systems and Stochastic processes, TAMS, Vol 321, N.2, 505--524 (1990)


\bibitem{Kif1} Y. Kifer,
Principal eigenvalues, topological pressure, and stochastic stability of equilibrium states, 
Israel J. Math, Vol 70 N-1, 1-47 (1990)

\bibitem{KLMN}  J. Knorst, A. O. Lopes, G. Muller and  A. Neumann,
Thermodynamic Formalism on the Skhorohd space:  the continuous-time Ruelle operator,  entropy, pressure, entropy production, and expansiveness, online in Sao Paulo Journal of Math. Sciences


\bibitem{LeSa} B. M. Levitan and I. Sargsjan, Sturm-Liouville and Dirac Operators, Kluwer (1991)




\bibitem{LMMS} A. Lopes, J. Mengue, J. Mohr and R. R. Souza,
Entropy and Variational Principle for one-dimensional
Lattice Systems with a general apriori probability: positive
and zero temperature,  Erg. Theo. and Dyn. Syst. 35 (6),   1925-1961 (2015)
%

\bibitem{ln} A. Lopes and A. Neumann, Large Deviations for stationary probabilities of a family of continuous-time Markov chains via Aubry-Mather theory, Journ. of Statistical Physics. Vol. 159 - Issue 4 pp 797-822 (2015)



%
%
\bibitem{LMST}
A. O. Lopes, J. Mohr, R. Souza and Ph. Thieullen,
Negative entropy, zero temperature and stationary Markov chains on the interval,
\emph{Bulletin of the Brazilian Mathematical Society} 40, 1-52, 2009.

\bibitem{LT} A. O. Lopes and Ph. Thieullen,
Transport and large deviations for Schrodinger operators and Mather measures, Modeling, Dynamics, Optimization and Bioeconomics III, Editors: Alberto Pinto and David Zilberman, Proceedings in Mathematics and Statistics, Springer Verlag, 247-255 (2018)

\bibitem{Loche} E. Locherbach,
Ergodicity and speed of convergence to equilibrium for
diffusion processes, https://eloecherbach.u-cergy.fr/cours.pdf

\bibitem{LMS} C. Maes, K. Netocny, and B. Shergelashvili, A selection of nonequilibrium issues, Methods of Contemporary Mathematical Statistical
Physics, 247-306 (2009)

\bibitem{AG} G. H. Muller, 
Thermodynamic formalism for
jump processes and diffusions, PhD Thesis -  UFRGS - (2023) 


\bibitem{PP}
W. Parry and M.  Pollicott, Zeta functions and the periodic
orbit structure of hyperbolic dynamics, \emph{Ast\'erisque}
Vol {187-188} 1990

%
%
%
%
%

\bibitem{Stri} R. Strichartz,
Analysis of the Laplacian 
on the Complete Riemannian Manifold,  Journ. of Funt. Anal.,  52, 48--79 (1983) 


    \bibitem{Tay2} M. Taylor, Partial Differential Equations - Volume 2, Springer Verlag
%



\bibitem{RY} D. Revuz and M. Yor, Continuous Martingales and Brownian Motion, 3rd ed., Springer Verlang, (1999).

\bibitem{Vila}
Vilarbo da Silva J., Estimativa da Taxa de Convergencia de uma Dinamica de Glauber,
Master Dissertation - UFRGS (2007)

\bibitem{Vi2}
C. Villani, Optimal transport: old and new, Springer-Verlag, Berlin, 2009.



\end{thebibliography}
\end{document}